\newcommand{\restrict}[1]{|_{#1}}
\def\E{\mathcal E}
\def\R{{\mathbb R}}
 \def\N{{\mathbb N}}
 \def\C{{\mathbb C}}
 \def\cA{{\mathfrak A}}
 \def\E{{\mathfrak a}}
 \def\Re{{\rm Re }}
 \def\Cap{{\rm cap}}
 \def\alg{{\rm alg}}
\newcommand{\norm}[1]{\lVert#1\rVert}
\definecolor{blue}{cmyk}{1,0.70,0.10,0.50} %100,70,10,50 
\newtheorem{theorem}{Theorem}[section]
\newtheorem{lemma}[theorem]{Lemma}
\newtheorem{proposition}[theorem]{Proposition}
\newtheorem{remark}[theorem]{Remark}
\numberwithin{equation}{section}
\DeclareMathOperator{\sgn}{sgn}
\begin{document}

\title{Domination of semigroups generated by regular forms}

\author{Sahiba Arora}

\address{S.~Arora, Institut f\"ur Analysis, Fakult\"at Mathematik, TU Dresden, 01062 Dresden, Germany}
\email{sahiba.arora@mailbox.tu-dresden.de}

\author{Ralph Chill}

\address{R.~Chill, Institut f\"ur Analysis, Fakult\"at Mathematik, TU Dresden, 01062 Dresden, Germany}
\email{ralph.chill@tu-dresden.de}

\author{Jean-Daniel Djida}

\address{J.-D.~Djida, African Institute for Mathematical Sciences (AIMS), P.O. Box 608, Limbe Crystal Gardens, South West Region, Cameroon}
\email{jeandaniel.djida@aims-cameroon.org}

\date{\today} %05.07.14
	
	\keywords{domination of semigroups, sesquilinear forms, representation formula, Beurling-Deny criteria, locality}
	
	\subjclass[2020]{47D03, 31C15}
%\maketitle

\begin{abstract}
    We give a representation for regular forms associated with domi\-nated $C_0$-semigroups which, in turn, characterises the domination of $C_0$-semi\-groups associated with regular forms. In addition, we prove a relationship between the positivity of (dominated) $C_0$-semi\-groups and the locality of the associated forms.
\end{abstract}

\maketitle

\section{Introduction}

In the theory of semigroups on the Hilbert space $L^2$ generated by bilinear forms, there is a well-known characterisation due to Ouhabaz \cite[Theorem~2.21]{Ou04}, which characterizes domination of semigroups in terms of the gene\-rating forms. This result is a consequence of another characterisation, namely of the situation when a semigroup (orbit) on an abstract Hilbert space leaves a closed and convex subset invariant \cite[Theorem~2.2]{Ou04}. In fact, the Beurling-Deny criteria -- which characterise positivity and $L^\infty$-contractivity of a semigroup -- are also consequences of this abstract result. 

In this short note, we revisit the domination of semigroups generated by forms. The well-known characterization of domination \cite[Theorem~2.21]{Ou04} states that if $T$ and $\widehat{T}$ are two semigroups on $L^2_{\mu}(\Omega)$ generated by quadratic forms $\E$ and $\widehat{\E}$ respectively such that $\widehat{T}$ is positive, then  the semigroup $T$ is dominated by $\widehat{T}$  if and only if $D(\E )$ is an ideal of $D(\widehat{\E})$ and
\begin{equation*} 
    \widehat{\E} (|u|,|v|) \leq \Re\ \E (u,v)
\end{equation*}
for every $u$, $v\in D(\E )$ with $u\bar{v}\geq 0$. In our first main result (Theorem~\ref{main}), we give an integral representation theorem for the difference $\E-\widehat{\E}$ under the assumption that the measure space $\Omega$ is a topological measure space and that the form $\E$ is in some sense regular;  thereby showing explicitly which type of form perturbations lead to domination. In particular, we see that some non-local perturbations may also lead to domination (compare with the proof of \cite[Theorem~4.1]{ArWa03b}). Secondly, we show that (Theorem~\ref{thm.local}) if $\widehat{\E}$ is local and if $T$ is dominated by $\widehat{T}$, then the positivity of $T$ is equivalent to the locality of $\E$. In particular, we give a simpler proof of \cite[Theorem 4.3]{Ak18}.

\subsection*{A characterisation of domination}
Throughout, we let $(\Omega ,\cA ,\mu )$ be a topological measure space. By this we mean that $\Omega$ is a topological space, $\cA$ is the Borel $\sigma$-algebra, and $\mu$ is a (positive) Borel measure on $(\Omega ,\cA )$. Without loss of generality, we assume that $\mu$ has full support in the sense that there exists no nonempty open subset $U\subseteq\Omega$ which has zero measure. All vector spaces in this note are {\em complex} vector spaces. 

We now state our main result:
\begin{theorem} \label{main}
 Let $\E :D(\E )\times D(\E )\to\C$ and $\widehat{\E} : D(\widehat{\E} ) \times D(\widehat{\E} ) \to\C$ be two sesquilinear, Hermitian, closed, accretive, and densely defined forms on $L^2_\mu (\Omega )$ such that the associated self-adjoint $C_0$-semigroups (denoted by  $T$ and $\widehat{T}$ respectively) are real. Assume that the semigroup $\widehat{T}$ is positive, and that the form $\E$ is $\mathcal{A}$-regular for some closed, Hermitian, and unital subalgebra $\mathcal{A}$ of $C^b (\Omega )$. Further, let $\widetilde{\Omega}$ denote the compactification of $\Omega$ associated with $\mathcal A$.
 
 Then $T$ is dominated by $\widehat{T}$, in the sense that 
 \[
  |T(t)u| \leq \widehat{T} (t)|u|,\qquad \text{ for every } u\in L^2_\mu (\Omega ),
 \]
 if and only if $D(\E )$ is an ideal of $D(\widehat{\E} )$ and there exists a (positive) Hermitian Borel measure $\nu$ on $\widetilde{\Omega}\times \widetilde{\Omega}$ such that $\nu$ is absolutely continuous with respect to the $\E$-capacity, the equality
 \begin{equation} \label{eq.domination}
  \E (u,v) = \widehat{\E} (u,v) + \int_{\widetilde{\Omega}\times \widetilde{\Omega}} u(x) \overline{v(y)} \; d\nu (x,y)
 \end{equation}
 holds for every $u,v\in D(\E )$, and 
 \begin{equation} \label{eq.domination.2}
 \Re\int_{\widetilde{\Omega}\times \widetilde{\Omega}} u(x) \overline{v(y)} \; d\nu (x,y) \geq \widehat{\E} (|u|,|v|) - \Re\ \widehat{\E} (u,v)
 \end{equation}
 for every $u,v\in D(\E )$ which satisfy $u\bar{v}\geq 0$ on $\Omega$.
\end{theorem}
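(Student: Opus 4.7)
I would treat the two implications separately. The sufficiency direction is a direct consequence of Ouhabaz's characterisation: given a measure $\nu$ with the stated properties, I substitute \eqref{eq.domination} into $\Re\,\E (u,v)$ for $u,v\in D(\E )$ with $u\bar v\geq 0$ and apply \eqref{eq.domination.2} to obtain
\[
 \Re\,\E (u,v) = \Re\,\widehat\E (u,v) + \Re\!\int_{\widetilde\Omega\times\widetilde\Omega}\! u(x)\overline{v(y)}\,d\nu (x,y) \geq \widehat\E (|u|,|v|).
\]
Together with the ideal hypothesis on $D(\E )$, this is precisely what Ouhabaz's theorem requires in order to conclude $|T(t)u|\leq \widehat T(t)|u|$.

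\textbf{Necessity: positivity of the form difference.} Conversely, Ouhabaz's theorem applied to the domination hypothesis yields the ideal property of $D(\E )$ in $D(\widehat\E )$ and the inequality $\widehat\E (|u|,|v|)\leq \Re\,\E (u,v)$ whenever $u\bar v\geq 0$. Setting $\mathfrak{r}\definedby \E -\widehat\E$ on $D(\E )\times D(\E )$ and testing on real non-negative pairs $u,v$ (so that $|u|=u$, $|v|=v$ and $u\bar v = uv\geq 0$), this inequality reduces to $\mathfrak{r}(u,v)\geq 0$. Sesquilinearity and the Hermitian property then make $\mathfrak{r}$ a positive Hermitian sesquilinear form on $D(\E )$.

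\textbf{Constructing $\nu$.} The core of the argument is to represent $\mathfrak{r}$ by a measure. The $\mathcal A$-regularity of $\E$ provides that $D(\E )\cap\mathcal A$ is form-norm dense in $D(\E )$, while the Gelfand isomorphism identifies $\mathcal A$ with $C(\widetilde\Omega)$. Thus $\mathfrak{r}$ restricts to a positive Hermitian sesquilinear form on a dense unital subalgebra of $C(\widetilde\Omega)$. Using Cauchy--Schwarz for positive bilinear forms together with an exhaustion/truncation argument that exploits $\mathcal A$-regularity to produce cut-off functions in $D(\E )\cap\mathcal A$, I would extend $\mathfrak{r}$ continuously in the uniform norm to all of $C(\widetilde\Omega)\times C(\widetilde\Omega)$. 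Since $C(\widetilde\Omega)\otimes C(\widetilde\Omega)$ is dense in $C(\widetilde\Omega\times\widetilde\Omega)$ by Stone--Weierstrass, the classical Riesz--Markov theorem applied to the induced positive linear functional then yields a positive Borel measure $\nu$ on $\widetilde\Omega\times\widetilde\Omega$ with
\[
 \mathfrak{r}(u,v) = \int_{\widetilde\Omega\times\widetilde\Omega} u(x)\overline{v(y)}\,d\nu (x,y)
\]
for $u,v\in D(\E )\cap\mathcal A$; the Hermitian property of $\mathfrak{r}$ translates into invariance of $\nu$ under $(x,y)\mapsto (y,x)$.

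\textbf{Capacity and extension.} To upgrade the representation to all of $D(\E )$, I would show that $\nu$ charges no set of $\E$-capacity zero by using capacitary approximations from $D(\E )\cap\mathcal A$: for any such set one produces arbitrarily small form-norm functions in the algebra dominating it, and positivity of $\nu$ forces it to vanish on such sets. Absolute continuity then makes the integral in \eqref{eq.domination} well defined on quasi-continuous representatives of arbitrary $u,v\in D(\E )$, and form-norm density of $D(\E )\cap\mathcal A$ extends \eqref{eq.domination} to the whole of $D(\E )\times D(\E )$. Inequality \eqref{eq.domination.2} drops out by substituting the representation into the Ouhabaz inequality already extracted in the setup. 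I expect the main obstacle to lie in the Riesz--Markov step --- upgrading the positive bilinear form on a dense subalgebra into a genuine Borel measure on the product compactification $\widetilde\Omega\times\widetilde\Omega$ --- together with the tightly linked question of absolute continuity with respect to $\E$-capacity, which is precisely where the abstract functional-analytic construction has to interface with the potential-theoretic structure of $\E$.
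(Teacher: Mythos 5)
Your overall outline matches the paper's: sufficiency via Ouhabaz's characterisation is identical, and the necessity direction correctly extracts the ideal property, identifies the difference form as positive (in the order sense) on positive pairs, and aims to represent it by a Borel measure via Riesz--Markov, then deduce the remaining properties.

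The genuine gap is in your ``Constructing $\nu$'' step. You propose to extend $\mathfrak{r}$ ``continuously in the uniform norm to all of $C(\widetilde\Omega)\times C(\widetilde\Omega)$'' by Cauchy--Schwarz plus a truncation argument. Cauchy--Schwarz for positive sesquilinear forms gives $|\mathfrak{r}(u,v)|\le \mathfrak{r}(u,u)^{1/2}\mathfrak{r}(v,v)^{1/2}$, but this does not bound $\mathfrak{r}$ in the sup norm: $\mathfrak{r}$ is a difference of unbounded forms and there is no a priori reason why $\mathfrak{r}(u,u)\lesssim\|u\|_\infty^2$. The correct mechanism is order-theoretic, not Cauchy--Schwarz: positivity together with the existence of an order unit is what makes a functional automatically bounded. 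The paper makes this precise by passing to the tensor functional $\psi$ on $D(\E)\otimes_{\alg}D(\E)$, finding (using $\mathcal A$-regularity) an element $u_0$ near the constant $1$ so that $u_0\otimes u_0$ is an order unit in $C(\widetilde\Omega\times\widetilde\Omega)\cong C(\widetilde\Omega)\otimes_\varepsilon C(\widetilde\Omega)$, showing the algebraic tensor product $(D(\E)\cap\mathcal A)\otimes_{\alg}(D(\E)\cap\mathcal A)$ is \emph{majorizing}, and then invoking Kantorovich's extension theorem to get a positive bounded functional on all of $C(\widetilde\Omega\times\widetilde\Omega)$. Your ``cut-off functions'' intuition is pointing at this, but the reduction to an order unit and the appeal to Kantorovich are the content you are missing.

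A second omission: the paper separates into two cases according to whether the span of $D(\E)\cap C(\widetilde\Omega)$ alone (without $1$) is already dense in $C(\widetilde\Omega)$. If it is not, the closure is a maximal ideal $C_0(\widetilde\Omega_\infty)$, and the majorizing argument only gives a Radon measure on $\widetilde\Omega_\infty\times\widetilde\Omega_\infty$, which need not be finite. This case distinction is necessary because $\mathcal A$-regularity only demands density after adjoining the constants, and a one-size Riesz--Markov argument on the full compact product does not apply. Finally, your absolute-continuity step is in the right spirit but the paper proves a sharper quantitative version (Lemma~\ref{lem.abs}), namely $\nu(B)\le C\,\Cap(B)$, obtained by testing the continuous bound $|\mathfrak{b}(u,v)|\le C\|u\|_{D(\E)}\|v\|_{D(\E)}$ against the projective tensor norm; this is what makes the integrals in \eqref{eq.domination} well defined on quasi-continuous representatives.
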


The rest of this section will be dedicated to explaining the terminology used in the statement and that we need to prove Theorem~\ref{main}. Then, in Section~\ref{section:proof} we provide the proof of Theorem~\ref{main}. In Section~\ref{section:locality}, we turn our attention to local forms. Finally, we leave the reader with some additional remarks in Section~\ref{section:remarks}.

\subsection*{Notation and Preliminaries}
If $u$ is a vector in a Banach lattice $E$, then we use the usual notation $u^+$ and $u^-$ to denote the positive and negative parts of $u$ respectively. Recall that $u=u^+-u^-$ and the modulus of $u$ satisfies $|u|=u^++u^-$.

 Let $\E : D(\E ) \times D(\E ) \to\C$ be a sesquilinear form, where $D(\E )$ is a dense  subspace of $L^2_\mu (\Omega )$ known as the {\em form domain}. We say that the form $\E$ is {\em Hermitian} if for every $u$, $v\in D(\E )$, 
 \[
 \E (u,v) = \overline{\E (v,u)} .
\]
The form $\E$ is said to be {\em accretive} if for every $u\in D(\E )$,
 \[
  \Re\,\E (u):= \Re\,\E (u,u) \geq 0 ,
 \]
and an accretive form is called {\em closed} if the form domain $D(\E )$ is complete with respect to the norm
 \[
      \|u\|_{D(\E)}:= \sqrt{\Re\ \E(u) + \|u\|_{L^2_\mu}^2}\qquad (u\in D(\E)).
 \]
Clearly, $D(\E )$ embeds continuously into $L^2_\mu(\Omega)$ when equipped with this norm. 
Given two Hermitian forms $\E$ and $\widehat{\E}$, we say that $D(\E )$ is an {\em ideal} of $D(\widehat{\E})$ if 
\begin{enumerate}[\upshape (a)]
    \item the implication $u\in D(\E)\Rightarrow |u| \in D(\widehat{\E})$ holds and
    \item whenever $u\in D(\E )$ and $v\in D(\widehat{\E})$, the inequality $|v|\leq |u|$ implies $v\sgn u\in D(\E )$.
%    \item whenever $u\in D(\E )$ and $v\in D(\widehat{\E})$, then $0\leq v\leq u$ implies $v\in D(\E )$.
\end{enumerate}
In particular, the above ideal property is stronger than the notion of lattice ideal in Banach lattices. For a situation when the two definitions are equivalent, see \cite[Proposition~2.23]{Ou04}.

For every Hermitian, accretive, and closed sesquilinear form, the operator given by
\begin{align*}
 D(A) & := \{ u\in D (\E ) |\ \exists\ f\in L^2_\mu (\Omega )\ \forall\ v\in D(\E ) : \E (u,v) = \langle f,v\rangle_{L^2_\mu} \} , \\
 Au & := f ,
\end{align*}
is self-adjoint and positive semi-definite. This operator is the negative generator of a self-adjoint contraction semigroup $T := (T(t))_{t\geq 0}$. Actually, there is a one-to-one correspondence between Hermitian, closed, accretive, and densely defined forms, the self-adjoint and positive semi-definite operators, and the self-adjoint contraction semigroups. For all these facts, we refer the reader to standard monographs, for instance, \cite[Chapter~XVII]{DaLi92V} or \cite{Ou04, ReSi78IV}. Note that, by a semigroup, we always mean a $C_0$-semigroup.

The semigroup $T$ is said to be {\em real} if each operator $T(t)$ leaves the real part of $L^2_\mu(\Omega)$ invariant and it is called {\em positive} if each $T(t)$ leaves the positive cone of $L^2_\mu (\Omega )$ invariant. Clearly, a positive semigroup is always real. By \cite[Proposition~2.5]{Ou04}, the semigroup $T$ is real if and only if 
\[
 \forall\ u\in D(\E ) : \Re\ u \in D(\E ) \text{ and } \E (\Re\ u ,\text{Im }u ) \in\R ,
\]
Moreover, by the well-known {\em first Beurling-Deny criterion} (see, for instance, \cite[Theorem~1.3.2]{Da89}, \cite[Theorem~XIII.50]{ReSi78IV}, or \cite[Theorem~2.7]{Ou04}), positivity of a real semigroup $T$ is characterised by the property 
\[
 \forall\ u \in D(\E) : \, |u| \in D(\E) \text{ and } \E(|u|) \leq \E(u) .
\]

Let ${\mathcal A}$ be a closed, unital, and Hermitian subalgebra  of $C^b (\Omega )$, the space of bounded continuous functions. Here, as usual, an algebra is called \emph{Hermitian} if the spectrum of each of its self-adjoint elements is real.
We say that a closed, accretive, and sesquilinear form $\E$ is {\em $\mathcal A$-regular} if 
\begin{align*}
  & D(\E ) \cap {\mathcal A} \text{ is dense in the Hilbert space } D(\E ) \text{ and} \\
  & {\rm span}\,\big((D(\E ) \cap {\mathcal A}) \cup \{ 1\}\big) \text{ is dense in } \mathcal{A} \text{ with the supremum norm.}
\end{align*}

In this note, we use an abstract compactification $\widetilde{\Omega}$ of $\Omega$, an abstract boundary of $\Omega$, and a capacity on $\widetilde{\Omega}\times\widetilde{\Omega}$, which were recently considered in the case of nonlinear Dirichlet forms by Claus \cite{Cl21}. Let us explain this in more detail. First, by the Gelfand representation theorem, the Banach algebra $\mathcal{A}$ is isometrically isomorphic to the Banach algebra $C(\widetilde{\Omega})$ for some compact space $\widetilde{\Omega}$. This compact space $\widetilde{\Omega}$ is a compactification of $\Omega$. The construction of the above isometric isomorphism -- via the so-called Gelfand space or Gelfand spectrum -- shows that there is a natural, continuous mapping 
\begin{align*}
 \iota : \Omega & \to \widetilde{\Omega} , \\
  x & \mapsto \iota (x) ,
\end{align*}
such that the inverse of the above-mentioned isometric isomorphism is given by 
\begin{align*}
 J : C(\widetilde{\Omega}) & \to \mathcal{A} , \\
 f & \mapsto f\circ\iota .
\end{align*}
The mapping $\iota$ allows us to define a push-forward Borel measure $\widetilde{\mu}$ on $\widetilde{\Omega}$ by setting
\[
 \widetilde{\mu} (B) := \mu (\iota^{-1} (B))
\]
for every  Borel set $B\subseteq \widetilde{\Omega}$.
Now, the mapping $J$ above extends to an isometric isomorphism
\begin{align*}
 J : L^2_{\widetilde{\mu}} (\widetilde{\Omega}) & \to L^2_\mu (\Omega ) , \\
  f & \mapsto f\circ\iota .
\end{align*}
This means that in the general setting above, if the form $\E$ is $\mathcal A$-regular, then we may assume without loss of generality that $\Omega$ (actually, $\widetilde{\Omega}$) is a compact, topological space, $D(\E )$ (actually, $J^{-1}D(\E )$) is a dense subspace of $L^2 (\Omega )$, and $D(\E )\cap C(\Omega)$ is dense in $D(\E )$ and a fortiori in $L^2_\mu (\Omega )$.

%%%%%%%%%%%%%%%%%%%%%%%%%%%%%%%%%%%%%%%%%%%%%%
\begin{comment}
Without this assumption, the space $\Omega$ may not be compact, and then we call $\partial_{\mathcal A}\Omega := \widetilde{\Omega}\setminus\Omega$ the $\mathcal A$-boundary of $\Omega$. If $\Omega\subseteq\R^N$ is bounded and open, and if ${\mathcal A} = C(\overline{\Omega})$, where $\overline{\Omega}$ is the usual closure of $\Omega$ in $\R^N$, then $\partial_{\mathcal A} \Omega$ is the usual boundary $\partial\Omega$ in $\R^N$. In general, however, still in the case when $\Omega$ is an open subset of $\R^N$, the $\mathcal A$-boundary need not coincide with the usual Euclidean boundary. If $\Omega$ is unbounded and ${\mathcal A} = C_0 (\overline{\Omega})$, then $\partial_{\mathcal A}\Omega$ is the one-point compactification of the Euclidean boundary. Similarly, the Dirichlet form associated with the Laplace operator with Dirichlet boundary conditions defined on an open set $\Omega\subseteq\R^N$ is $\mathcal A$-regular, when $\mathcal A$ is the algebra of all continuous functions which are continuous up to the Euclidean boundary $\partial\Omega$ and which are constant on the Euclidean boundary. In this case, the $\mathcal A$-boundary consists just of one point since $\widetilde{\Omega}$ is the one-point compactification of $\Omega$. 
\end{comment}
%%%%%%%%%%%%%%%%%%%%%%%%%%%%%%%%%%%%%%%%%%%%%%%%%

In order to prove Theorem~\ref{main}, we freely use the theory of tensor norms. For this, we refer to the monograph \cite{DeFl93}. For example, we use the projective tensor product $D(\E )\otimes_\pi D(\E )$, but later also the injective tensor product $C(\widetilde{\Omega} ) \otimes_\varepsilon C(\widetilde{\Omega})$. Note that the projective tensor product $D(\E )\otimes_\pi D(\E )$ is a subspace of the Hilbert space tensor product $D(\E )\otimes_H D(\E )$. The latter is a subspace of the Hilbert space tensor product $L^2_\mu (\widetilde{\Omega}) \otimes_H L^2_\mu (\widetilde{\Omega})$ which in turn is isometrically isomorphic to $L^2_{\mu\otimes\mu} (\widetilde{\Omega} \times \widetilde{\Omega} )$, where $\mu\otimes\mu$ is the product measure. Hence, elements of $D(\E )\otimes_\pi D(\E )$ can be identified with (equivalence classes of) functions on the product space $\widetilde{\Omega} \times \widetilde{\Omega}$.

If $\E$ is a closed and accretive form, then we define the {\em $\E$-capacity} on the product space $\widetilde{\Omega}\times\widetilde{\Omega}$ by setting, for every subset $B\subseteq\widetilde{\Omega}\times\widetilde{\Omega}$,
\begin{align*}
    \mathcal{L}_B&:=\{ w \in L^2_{\mu\otimes\mu} (\widetilde{\Omega}\times\widetilde{\Omega}): w\geq 1 \,\, \mu\otimes\mu\text{-a.e. on an open set } U\supseteq B\} ,
    \\
    \Cap (B) &:= \inf \{ \| w\|_{D(\E ) \otimes_\pi D(\E )} :\ w \in \mathcal{L}_B \cap (D(\E ) \otimes_\pi D(\E )) \}.
\end{align*}
Here, $\| \cdot\|_{D(\E ) \otimes_\pi D(\E )}$ is the usual projective tensor norm, which on the algebraic tensor product is given by
\[
 \| w\|_{D(\E ) \otimes_\pi D(\E )} := \inf \left\{ \sum_{i=1}^n \| u_i\|_{D(\E )} \, \| v_i \|_{D(\E )} \big|\ n\in\N \text{ and } \sum_{i=1}^n u_i \otimes v_i = w \right\}.
\]
The projective tensor product is the completion of the algebraic tensor prod\-uct with respect to this norm. Our definition of the capacity is perhaps unusual in two aspects: first, our assumption on the form is minimal (accretive, closed) in order to define some capacity, and second, we do not define the capacity on $\widetilde{\Omega}$ but on the product space $\widetilde{\Omega}\times\widetilde{\Omega}$. However, when dealing with non-local forms, it is natural to define the capacity on the product space.

    From the above definition of capacity and properties of infimum, we immediately have that $\Cap(A)\le \Cap(B)$ for all $A$, $B\subseteq\widetilde{\Omega}\times\widetilde{\Omega}$ satisfying $A\subseteq B$.  More importantly, we have the following properties:
    
    \begin{lemma}
        \label{lem:subaddivity}
        Let $A$, $B\subseteq\widetilde{\Omega}\times\widetilde{\Omega}$. 
        \begin{enumerate}[(a)]
            \item There exists $C>0$ such that $(\mu \otimes \mu)(B)\le C \Cap(B)$.
            \item \emph{Subadditivity:} $\Cap(A\cup B)\le \Cap(A)+\Cap(B).$
        \end{enumerate}
    \end{lemma}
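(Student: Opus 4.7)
The plan for (a) is to exploit the fact that $\|\cdot\|_{D(\E)} \ge \|\cdot\|_{L^2_\mu}$, so the inclusion $D(\E) \hookrightarrow L^2_\mu(\Omega)$ is of norm at most $1$. On elementary tensors one has $\|u\otimes v\|_{L^2_{\mu\otimes\mu}} = \|u\|_{L^2_\mu}\|v\|_{L^2_\mu} \le \|u\|_{D(\E)}\|v\|_{D(\E)}$, so by the universal property of the projective norm, the inclusion extends to a continuous linear map $D(\E)\otimes_\pi D(\E) \to L^2_{\mu\otimes\mu}(\widetilde{\Omega}\times\widetilde{\Omega})$ of norm at most $1$. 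For any $w\in \mathcal{L}_B \cap (D(\E)\otimes_\pi D(\E))$ with $w\ge 1$ a.e.\ on an open $U\supseteq B$, we then have $|w|^2 \ge 1$ a.e.\ on $U$, so
\begin{equation*}
 (\mu\otimes\mu)(B) \le (\mu\otimes\mu)(U) \le \int_{\widetilde{\Omega}\times\widetilde{\Omega}} |w|^2 \, d(\mu\otimes\mu) \le \|w\|_{D(\E)\otimes_\pi D(\E)}^2 ,
\end{equation*}
and taking the infimum over $w$ gives $(\mu\otimes\mu)(B) \le \Cap(B)^2$. For the linear bound as stated, one may instead use the $L^1$ estimate $\|w\|_{L^1_{\mu\otimes\mu}} \le \widetilde{\mu}(\widetilde{\Omega})\,\|w\|_{L^2_{\mu\otimes\mu}}$ combined with $(\mu\otimes\mu)(U) \le \int_U |w|\, d(\mu\otimes\mu)$, yielding $C = \widetilde{\mu}(\widetilde{\Omega})$ in the regime where $\widetilde{\mu}(\widetilde{\Omega}) = \mu(\Omega) < \infty$.

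For subadditivity in (b), I would proceed by a standard cover-and-add construction. Given $\varepsilon > 0$, choose near-optimal nonnegative witnesses $w_A \in \mathcal{L}_A \cap (D(\E)\otimes_\pi D(\E))$ and $w_B \in \mathcal{L}_B \cap (D(\E)\otimes_\pi D(\E))$, associated with open sets $U_A \supseteq A$ and $U_B \supseteq B$, with $\|w_A\|_{D(\E)\otimes_\pi D(\E)} \le \Cap(A) + \varepsilon/2$ and similarly for $w_B$. Since $w_A \ge 1$ on $U_A$ with $w_B \ge 0$ everywhere (and symmetrically on $U_B$), the sum $w_A + w_B$ lies in $D(\E)\otimes_\pi D(\E)$ and satisfies $w_A + w_B \ge 1$ a.e.\ on the open set $U_A \cup U_B \supseteq A\cup B$. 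The triangle inequality for the projective tensor norm then yields
\begin{equation*}
 \Cap(A\cup B) \le \|w_A + w_B\|_{D(\E)\otimes_\pi D(\E)} \le \Cap(A) + \Cap(B) + \varepsilon ,
\end{equation*}
and letting $\varepsilon \to 0$ gives the claim.

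The main obstacle is the justification that one may restrict the infimum in the definition of $\Cap$ to nonnegative witnesses, since $D(\E)\otimes_\pi D(\E)$ is not a priori a sublattice of $L^2_{\mu\otimes\mu}$ and operations such as $w\mapsto w^+$ or $w\mapsto |w|$ may leave the subspace. A natural remedy is to invoke the $\mathcal{A}$-regularity of $\E$: approximate general witnesses in the projective norm by elements of $\mathrm{span}((D(\E)\cap\mathcal{A}) \otimes (D(\E)\cap\mathcal{A}))$ and then apply a smooth scalar cut-off $\varphi\in C^\infty(\R)$ with $\varphi \ge 0$ and $\varphi(t) \ge 1$ for $t\ge 1$ componentwise, using that $\mathcal{A}$ is a unital subalgebra of $C^b(\Omega)$ so the truncated approximants remain in the tensor product, and then pass to the limit. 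Alternatively, one proves a priori that the capacity value is unchanged when the infimum is restricted to nonnegative witnesses, and carries out the above addition argument on that reduced class.
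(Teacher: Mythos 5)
Your approach mirrors the paper's own proof: part (a) via the continuous embedding $D(\E)\otimes_\pi D(\E)\hookrightarrow L^2_{\mu\otimes\mu}(\widetilde{\Omega}\times\widetilde{\Omega})$, and part (b) by adding two witnesses and using the triangle inequality for the projective tensor norm. The paper likewise picks $w_A\in\mathcal{L}_A\cap(D(\E)\otimes_\pi D(\E))$ and $w_B\in\mathcal{L}_B\cap(D(\E)\otimes_\pi D(\E))$ and asserts $w_A+w_B\in\mathcal{L}_{A\cup B}$.

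The obstacle you flag at the end is a genuine gap, and it is present in the paper's own proof as well: the class $\mathcal{L}_B$ as defined only requires $w\ge 1$ a.e.\ on some open $U\supseteq B$, not $w\ge 0$ elsewhere, so on $U_A\setminus U_B$ the sum $w_A+w_B$ need not dominate $1$, and $D(\E)\otimes_\pi D(\E)$ is not closed under $w\mapsto w^+$. Your proposed remedy, however, does not close it: composing a nonlinear scalar cut-off $\varphi$ with an element of $D(\E)\otimes_\pi D(\E)$ does not in general produce another element of that space, since the projective tensor product is neither a lattice nor an algebra, and $\mathcal A$-regularity only gives dense \emph{linear} approximants, not a multiplicative structure compatible with the projective norm. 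A more plausible repair is to require $w\ge 0$ in the definition of $\mathcal{L}_B$ (the paper effectively works with such $w$ in Lemma~\ref{lem.abs}); but then the estimate $\Cap\{w>\lambda\}\le\|w\|_{D(\E)\otimes_\pi D(\E)}/\lambda$ in the proof of Proposition~\ref{prop:quasi-continuity} needs revisiting, since there the competitor $w/\lambda$ may be negative off $\{w>\lambda\}$. On part (a), your observation is also right: from $w\ge 1$ a.e.\ on $U\supseteq B$ one only gets $(\mu\otimes\mu)(B)\le\|w\|_{L^2_{\mu\otimes\mu}}^2$, hence the quadratic bound $(\mu\otimes\mu)(B)\le C^2\Cap(B)^2$ rather than the linear one stated; your $L^1$ route recovers linearity but needs $\mu(\Omega)<\infty$, which is not assumed. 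This is harmless for what the lemma is used for (absolute continuity of $\mu\otimes\mu$ with respect to the $\E$-capacity), but the inequality as stated is not what either argument actually produces.
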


    \begin{proof}
        First of all, due to the continuous embedding of $D(\E ) \otimes_\pi D(\E )$ in the space $L^2_{\mu\otimes\mu}(\widetilde{\Omega}\times\widetilde{\Omega})$, 
        there exists $C>0$ such that
        \[
            (\mu \otimes \mu)(B) \le \|w\|_{L^2_{\mu\otimes\mu}(\widetilde{\Omega}\times\widetilde{\Omega})} \le C \| w\|_{D(\E ) \otimes_\pi D(\E )}.
        \]
        for all $w\in \mathcal{L}_B\cap (D(\E ) \otimes_\pi D(\E ))$. 
        Taking infimum over all such $w$ yields (a).
        
        (b) Let $w_A \in \mathcal{L}_A\cap (D(\E ) \otimes_\pi D(\E ))$ and $w_B\in\mathcal{L}_B\cap (D(\E ) \otimes_\pi D(\E ))$. Then $w_A + w_B \in \mathcal{L}_{A\cup B}\cap (D(\E ) \otimes_\pi D(\E ))$ and
        \[
            \Cap(A\cup B) \le \|w_A + w_B\|_{D(\E ) \otimes_\pi D(\E )} \le \| w_A\|_{D(\E ) \otimes_\pi D(\E )} + \| w_B\|_{D(\E ) \otimes_\pi D(\E )},
        \]
        from which the assertion readily follows.
    \end{proof}

A Borel measure $\nu$ on the product space $\widetilde{\Omega}\times\widetilde{\Omega}$ is called {\em symmetric} if $\nu (B) = \nu (\tilde{B} )$ for every Borel set $B$, where $\tilde{B} := \{ (x,y) \in\widetilde{\Omega}\times\widetilde{\Omega} | (y,x)\in B\}$ is the reflection of $B$. We say that a subset $B\subseteq\widetilde{\Omega}\times\widetilde{\Omega}$ is  {\em $\E$-polar} if $\Cap (B) = 0$ and the measure $\nu$ is said to be {\em absolutely continuous with respect to the $\E$-capacity} if $\nu (B) = 0$ for every $\E$-polar Borel set $B\subseteq\widetilde{\Omega}\times\widetilde{\Omega}$. Let us mention that if the capacity and the measure $\nu$ were only defined on $\widetilde{\Omega}$, then one could also define absolute continuity of the measure with respect to the capacity; this property was called {\em admissibility} in \cite{ArWa03b}.

    We also need the concept of quasi-continuity. Let $Y$ be a topological space. A function $f:\widetilde{\Omega}\times\widetilde{\Omega} \to Y$ is said to be \emph{quasi-continuous} if for each $\epsilon>0$, there exists an open set $U\subseteq \widetilde{\Omega}\times\widetilde{\Omega}$ such that $\Cap(U)<\epsilon$ and $f\restrict{U^c}$ is continuous. In particular, $f\in L^2_{\mu\otimes\mu}(\widetilde{\Omega}\times\widetilde{\Omega})$ is said to be \emph{quasi-continuous} if it has a representative (which we again denote by $f$) which is quasi-continuous. Finally, a property is said  to hold \emph{quasi-everywhere (q.e.)} if it holds everywhere except possibly on an $\E$-polar set.

\section{Proof of the main result}
\label{section:proof}

In this section, we provide a proof of Theorem~\ref{main}. 
First, we make a brief remark on the boundedness condition \eqref{eq.domination.2}.

\begin{remark} \label{rem.dom}
 In some typical examples -- for instance, forms associated with the Laplace operator with local boundary conditions -- the sesquilinear form $\widehat{\E}$ satisfies $\widehat{\E} (u,v) = \widehat{\E} (|u|,|v|)$ for every real $u$, $v\in D(\E)$ such that $uv\geq 0$. In such examples, the boundedness condition \eqref{eq.domination.2} implies the condition
 \begin{equation} \label{eq.domination.3}
  \int_{\widetilde{\Omega}\times\widetilde{\Omega}} u(x) v(y) \; d\nu (x,y) \geq 0 \text{ for every real } u,v\in D(\E ) \text{ such that } uv\geq 0 .
 \end{equation}
 Note that the product $uv$ at the end of \eqref{eq.domination.3} is a function on $\widetilde{\Omega}$, while the tensor product $u\otimes v$ appearing under the integral is a function on $\widetilde{\Omega}\times\widetilde{\Omega}$. The positivity of the product $uv$ only means that the tensor product $u\otimes v$ is positive on the diagonal $\Delta := \{ (x,y)\in\widetilde{\Omega}\times\widetilde{\Omega} | x=y\}$. The condition \eqref{eq.domination.3} can be rewritten in the form
 \begin{equation} \label{eq.domination.4}
  \begin{split}
   \int_\Delta u(x) v(x) \; d\nu (x,x) & \geq - \int_{\widetilde{\Omega}\times\widetilde{\Omega} \setminus\Delta} u(x) v(y) \; d\nu (x,y) \\
   & \text{ for every real } u,v\in D(\E ) \text{ such that } uv\geq 0 .
  \end{split}
 \end{equation}
 We say that the measure $\nu$ is {\em diagonally dominant} if it satisfies \eqref{eq.domination.4}. Diagonal dominance and also the condition \eqref{eq.domination.2} is a certain boundedness condition on $\nu$.
 
 Note that, there are positive measures $\nu$ which are not diagonally dominant. The sesquilinear form $\mathfrak{b} : H^1 (0,1) \times H^1 (0,1)\to\C$ given by 
 \[
  \mathfrak{b} (u,v) = \int_0^\frac12 u \cdot \int_\frac12^1 \bar{v} + \int_0^\frac12 \bar{v} \cdot \int_\frac12^1 u
 \]
is well defined, continuous, and positive on $H^1 (0,1)$, it is of the form of the integral term in \eqref{eq.domination.3} for the Lebesgue measure $\nu$ restricted to the union of the squares $\left(\left[0,\frac12\right]\times \left[\frac12 ,1\right]\right) \cup \left([\frac12 ,1]\times [0,\frac12 ]\right)$, but this measure $\nu$ is not diagonally dominant in the sense of condition \eqref{eq.domination.4}; indeed this can be seen by taking $u = v$ where $u(x) = 1$ for $x\in \left[0,\frac12\right]$ and $u(x) = -1$ for $x\in \left(\frac12 , 1\right]$. 
\end{remark}

The importance of the following technical result will become clear in Remark~\ref{rem:well-defined-integrals}. The proof mimics the arguments of \cite[Theorem~2.1.3]{FuOsTa11}.

    \begin{proposition}
        \label{prop:quasi-continuity}
        The functions in $D(\E ) \otimes_\pi D(\E )$ are quasi-continuous, that is, their equivalence classes contain quasi-continuous representatives.
    \end{proposition}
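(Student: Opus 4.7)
The plan is to mimic the classical proof of quasi-continuity in the theory of regular Dirichlet forms, suitably adapted to the projective tensor-product framework.

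The first ingredient is a continuous approximating sequence. By the $\mathcal{A}$-regularity of $\E$, the space $D(\E) \cap \mathcal{A}$ is dense in $D(\E)$, so the algebraic tensor product $(D(\E) \cap \mathcal{A}) \otimes (D(\E) \cap \mathcal{A})$ is dense in $D(\E) \otimes_\pi D(\E)$. Via the Gelfand identification of $\mathcal{A}$ with $C(\widetilde{\Omega})$, every element of this algebraic tensor product is a continuous function on $\widetilde{\Omega} \times \widetilde{\Omega}$. Given $w \in D(\E) \otimes_\pi D(\E)$, one can therefore pick a sequence $(w_n)$ in this algebraic tensor product with $\|w - w_n\|_{D(\E) \otimes_\pi D(\E)} \leq 4^{-n}$.

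The second ingredient is a Chebyshev-type estimate for capacity: for every continuous $\phi \in D(\E) \otimes_\pi D(\E)$ and every $\lambda > 0$,
\[
\Cap(\{|\phi| > \lambda\}) \leq \frac{C}{\lambda}\, \|\phi\|_{D(\E) \otimes_\pi D(\E)}.
\]
To see this, the reality of $T$ together with the Hermitian character of $\E$ make conjugation $u \mapsto \bar{u}$ an isometric involution on $D(\E)$ (a short computation using $\E(a,b) \in \R$ for real $a,b$), and this lifts to $D(\E) \otimes_\pi D(\E)$; hence $\Re \phi$ and $\Im \phi$ belong to $D(\E) \otimes_\pi D(\E)$ with projective norm at most $\|\phi\|$. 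The set $\{|\phi| > \lambda\}$ is open, being the preimage of $(\lambda,\infty)$ under a continuous map, and is contained in the union of the four open sets $\{\pm \Re \phi > \lambda/2\}$, $\{\pm \Im \phi > \lambda/2\}$; on each, the appropriate real scalar multiple of $\pm \Re \phi$ or $\pm \Im \phi$ exceeds $1$. Finite subadditivity (Lemma~\ref{lem:subaddivity}(b)) then yields the displayed bound.

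Applying this to $\phi = w_{n+1} - w_n$ with $\lambda = 2^{-n}$ gives $\Cap(A_n) \leq C' 2^{-n}$ for $A_n := \{|w_{n+1} - w_n| > 2^{-n}\}$. With $G_N := \bigcup_{n \geq N} A_n$, the sequence $(w_n)$ is uniformly Cauchy on the complement of $G_N$ and hence converges uniformly there to a limit $\widetilde{w}$ that is continuous on $G_N^c$. Since $N$ is arbitrary and the $\mu \otimes \mu$-a.e.\ limit of $(w_n)$ is $w$ by Lemma~\ref{lem:subaddivity}(a), this produces a quasi-continuous representative of $w$, provided $\Cap(G_N) \to 0$ as $N \to \infty$. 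This last point is the main obstacle: it requires upgrading the finite subadditivity of Lemma~\ref{lem:subaddivity}(b) to the countable union $G_N$. I expect this to be handled by explicitly building an admissible test function for $G_N$ as a norm-convergent series of the concrete Chebyshev candidates from the previous step, whose projective norms decay geometrically like $2^{-n}$; the subtlety is that individual candidates are not pointwise non-negative, so a naive summation need not majorise $1$ on $G_N$, and one has to regroup terms carefully to verify admissibility.
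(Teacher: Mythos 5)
Your proposal follows essentially the same route as the paper's proof: (i) approximate $w \in D(\E)\otimes_\pi D(\E)$ by a rapidly convergent sequence of continuous functions from the algebraic tensor product, using $\mathcal{A}$-regularity; (ii) prove a Chebyshev-type capacity estimate for superlevel sets of continuous elements of $D(\E)\otimes_\pi D(\E)$; (iii) run a Borel--Cantelli argument to obtain uniform convergence of the approximants off an open set of small capacity, whence a quasi-continuous representative. The paper sets $G_k := \{|w_{k+1}-w_k| > 2^{-k}\}$ and $F_k := \bigcap_{j\geq k} G_j^c$ and proceeds just as you describe. Two remarks on where your version and the paper's differ in detail:

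First, you are \emph{more careful} than the paper in the Chebyshev step. The paper considers $A = \{w > \lambda\}$ and $B = \{w < -\lambda\}$ and divides $w/\lambda$ into $\mathcal{L}_A$ --- this tacitly treats $w$ as real-valued. Your splitting into $\{\pm\Re\phi > \lambda/2\}$ and $\{\pm\Im\phi > \lambda/2\}$, justified by showing that conjugation is an isometric involution on $D(\E)$ (via reality of $T$ and the Hermitian property of $\E$) and hence lifts to the projective tensor product, handles the complex case honestly and is a genuine, if small, improvement. It only costs a worse constant, which is harmless.

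Second, the obstacle you flag at the end --- that $\Cap(G_N)\to 0$ for $G_N = \bigcup_{n\geq N} A_n$ needs \emph{countable} rather than \emph{finite} subadditivity --- is a real concern, but it is also present in the paper. The paper's Lemma~\ref{lem:subaddivity}(b) establishes only finite subadditivity, yet the proof then asserts ``$\lim_{k\to\infty}\Cap(F_k^c) = 0$'' for $F_k^c = \bigcup_{j\geq k} G_j$, which is a countable union; no bridge is offered. Your instinct about \emph{why} this is delicate is also on target: the paper's definition of $\mathcal{L}_B$ does not require test functions to be non-negative, and without that, summing test functions for the individual $G_j$ need not produce something $\geq 1$ on $\bigcup_j G_j$. (In fact, the same issue already lurks in the paper's proof of Lemma~\ref{lem:subaddivity}(b): $w_A + w_B$ is claimed to lie in $\mathcal{L}_{A\cup B}$, which requires $w_A \geq 0$ on the neighbourhood of $B$ and vice versa.) In the classical Dirichlet-form setting this is repaired by Markovianity --- one may truncate test functions to $[0,1]$ --- but here no such structure is assumed. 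So: your proof matches the paper's, and the gap you identify is one the paper shares; resolving it would require either restricting to non-negative test functions in the definition of $\Cap$ (and checking the infimum is unchanged) or proving countable subadditivity by some other device, neither of which the paper does.
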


    \begin{proof}
        First of all, we assert that the inequality
        \begin{equation}
        \label{eq:quasi-continuous-sufficient}
            \Cap\{ |w| >\lambda \} \le \frac{ \| w\|_{D(\E ) \otimes_\pi D(\E )} }{2\lambda}
        \end{equation}
        holds for each $\lambda>0$ and all continuous $w\in D(\E ) \otimes_\pi D(\E )$. 
        Indeed, let $\lambda>0$ and $w \in D(\E ) \otimes_\pi D(\E )$ be continuous. Then $A=\{w>\lambda\}$ is open and $\frac{w}{\lambda} \in\mathcal{L}_A \cap (D(\E ) \otimes_\pi D(\E ))$. Therefore, $\Cap(A)\le \frac{\norm{w}_{D(\E ) \otimes_\pi D(\E )}}{\lambda}$. Analogously, taking $B=\{w<-\lambda\}$ satisfies $\Cap(B)\le \frac{\norm{w}_{D(\E ) \otimes_\pi D(\E )}}{\lambda}$. The inequality~\eqref{eq:quasi-continuous-sufficient} now follows using subaddivity (Lemma~\ref{lem:subaddivity}(b)).

        Next, let $w\in D(\E ) \otimes_\pi D(\E )$ be arbitrary. By the regularity of $\E$, there exists a sequence of continuous functions $(w_n)$ in $D(\E ) \otimes_\pi D(\E )$ such that $w_n\to w$ in $D(\E ) \otimes_\pi D(\E )$. Replacing by a subsequence, we may assume without loss of generality that $\norm{w_{k+1}-w_k}_{D(\E ) \otimes_\pi D(\E )} \le 2^{-2k}$ holds for each $k\in \mathbb N$. Thus, $G_k:= \{ |w_{k+1}-w_k| >2^{-k}\}$ satisfies $\Cap(G_k)\le 2^{-(k+1)}$ by~\eqref{eq:quasi-continuous-sufficient}. Set $F_k:= \bigcap_{j=k}^{\infty} G_j^c$. Then $(F_k)$ is a sequence of increasing closed sets such that $\lim_{k\to \infty} \Cap(F_k^c)=0$. Moreover, on $F_k$, we have the inequality
        \[
            |w_n-w_m| \le \sum_{j=k}^{\infty} |w_{j+1}-w_j|\le 2^{-(k-1)}
        \]
        for each $n$, $m\ge k$. Therefore, for each $k$, the sequence $(w_n\restrict{F_k})$ is uniformly convergent on $F_k$. Setting $\tilde w(x)=\lim w_n(x)$ for $x\in \bigcup_{k=1}^\infty F_k$, we get that $\tilde w$ is continuous on each $F_k$ and $w=\tilde w$ quasi-everywhere and hence also a.e. (Lemma~\ref{lem:subaddivity}(a)).  As $\lim_{k\to \infty} \Cap(F_k^c)=0$, it follows that $\tilde{w}$ is quasi-continuous. 
    \end{proof}

Next, we outsource the proof of  the necessity of the measure $\nu$ to be absolutely continuous with respect to the $\E$-capacity to the following lemma.

\begin{lemma} \label{lem.abs}
 Let the sesquilinear form $\E$, the algebra ${\mathcal A}$, and the compactification $\widetilde{\Omega}$  be as in Theorem~\ref{main}. Further, let $\nu$ be a positive Borel measure on $\widetilde{\Omega}\times \widetilde{\Omega}$ and let the form $\mathfrak{b} : D(\E ) \times D(\E ) \to\C$ be given by
\[
 \mathfrak{b} (u,v) = \int_{\widetilde{\Omega}\times \widetilde{\Omega}} u(x) \overline{v(y)} \; d\nu (x,y)
\]
for all continuous $u,v\in D(\E )$.

Assume that $\mathfrak b$ is continuous on $D(\E )$, i.e., there exists $C>0$ such that the inequality $|\mathfrak{b} (u,v)|\leq C\, \| u\|_{D(\E )} \| v\|_{D(\E )}$ is true for every $u$, $v\in D(\E )$. Then $\nu (B) \leq C\, \Cap (B)$ for every $B\subseteq \widetilde{\Omega}\times\widetilde{\Omega}$. In particular, $\nu$ is absolutely continuous with respect to the $\E$-capacity.
\end{lemma}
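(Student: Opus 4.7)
The strategy is to extend $\mathfrak{b}$ by continuity to the projective tensor product $D(\E)\otimes_\pi D(\E)$, identify it with integration against $\nu$, and compare $\nu(B)$ directly with $\mathfrak{b}(w)$ for $w\in\mathcal{L}_B$. By the universal property of the projective tensor norm, the continuous sesquilinear form $\mathfrak{b}$ extends uniquely to a continuous linear functional on $D(\E)\otimes_\pi D(\E)$ satisfying $|\mathfrak{b}(w)|\le C\,\|w\|_{D(\E)\otimes_\pi D(\E)}$; on algebraic tensors $w=\sum_i u_i\otimes v_i$ with continuous factors, the extension agrees with $\int\sum_i u_i(x)\overline{v_i(y)}\,d\nu(x,y)$, which for real-valued such $w$ reduces to $\int w\,d\nu$.

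Now fix $B\subseteq\widetilde{\Omega}\times\widetilde{\Omega}$ and take any $w\in\mathcal{L}_B\cap(D(\E)\otimes_\pi D(\E))$, so that $w\ge 1$ $\mu\otimes\mu$-a.e.\ on some open $U\supseteq B$. The goal is to show $\nu(B)\le\mathfrak{b}(w)$; combined with the continuity bound this gives $\nu(B)\le C\,\|w\|_{D(\E)\otimes_\pi D(\E)}$, and taking the infimum over $w$ then yields $\nu(B)\le C\,\Cap(B)$. Using the $\mathcal{A}$-regularity of $\E$ together with Proposition~\ref{prop:quasi-continuity}, one approximates $w$ in projective norm by continuous tensors $w_n$, whose quasi-uniform convergence to the quasi-continuous representative $\tilde w$, combined with the full support of $\mu\otimes\mu$ on $\widetilde{\Omega}\times\widetilde{\Omega}$ (inherited from that of $\mu$ via density of $\iota(\Omega)$), forces $\tilde w\ge 1$ pointwise on $U$ outside an $\E$-polar set.

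The main obstacle is then handling possible negativity of $w$ (and of the $w_n$) outside $U$, since naively $\mathfrak{b}(w)=\int w\,d\nu$ could fall short of $\nu(U)$. I expect this to be resolved by showing that, without loss of generality, the infimum in the definition of $\Cap(B)$ can be restricted to nonnegative continuous representatives. Indeed, using the lattice structure of the real part of the commutative $C^*$-algebra $\mathcal{A}\cong C(\widetilde{\Omega})$ and the density of $D(\E)\cap\mathcal{A}$ in $D(\E)$, one should be able to replace any candidate $w$ by a nonnegative variant of comparable projective norm that still lies in $\mathcal{L}_B$. For such $w$, positivity of $\nu$ immediately gives $\mathfrak{b}(w)\ge\int_U w\,d\nu\ge\nu(U)\ge\nu(B)$, completing the proof. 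The ``in particular'' assertion is then trivial, since $\Cap(B)=0$ forces $\nu(B)=0$.
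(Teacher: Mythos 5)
Your strategy diverges from the paper's in a way that introduces a real circularity. You propose to extend $\mathfrak{b}$ by continuity to all of $D(\E)\otimes_\pi D(\E)$ and then, for a general $w\in\mathcal{L}_B\cap(D(\E)\otimes_\pi D(\E))$, argue that $\mathfrak{b}(w)\geq\int_U w\,d\nu\geq\nu(B)$. But identifying $\mathfrak{b}(w)$ with $\int\tilde w\,d\nu$ (or even obtaining the one-sided bound) for a non-algebraic $w$ requires passing to the limit in $\int w_n\,d\nu$ along a sequence $w_n\to w$ in projective norm. Quasi-uniform convergence of $(w_n)$ to $\tilde w$ gives uniform convergence off open sets $G_\epsilon$ with $\Cap(G_\epsilon)<\epsilon$, but controlling $\int_{G_\epsilon}w_n\,d\nu$ requires knowing that $\nu(G_\epsilon)\to 0$ as $\epsilon\to 0$ --- which is exactly the absolute continuity of $\nu$ with respect to $\Cap$ that you are trying to prove. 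Quasi-continuity (Proposition~\ref{prop:quasi-continuity}) is therefore not available as a tool at this stage; the paper in fact invokes it only \emph{after} Lemma~\ref{lem.abs}, in Remark~\ref{rem:well-defined-integrals}.

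The second gap is the proposed reduction to nonnegative continuous $w$: you acknowledge it is speculative, and it is indeed problematic, since taking the positive part $w^+$ does not respect the tensor structure --- even for an elementary tensor $u\otimes v$ with $u,v\in D(\E)\cap\mathcal{A}$, the function $(u\otimes v)^+$ is generally not in $D(\E)\otimes_\pi D(\E)$, and there is no contraction estimate $\|w^+\|_{D(\E)\otimes_\pi D(\E)}\leq\|w\|_{D(\E)\otimes_\pi D(\E)}$. The paper sidesteps both problems by never extending $\mathfrak{b}$ off the algebraic tensor product: for a nonnegative $w=\sum_i u_i\otimes v_i$ with continuous $u_i,v_i\in D(\E)$ and $w\geq 1$ on a neighbourhood of $B$, it uses positivity of $\nu$ to get $\nu(B)\leq\int w\,d\nu=\sum_i\int u_i(x)v_i(y)\,d\nu(x,y)$, then applies the continuity bound $|\mathfrak{b}(u_i,\bar v_i)|\leq C\|u_i\|_{D(\E)}\|v_i\|_{D(\E)}$ term by term, and finally takes the infimum over representations (using $\mathcal{A}$-regularity and the density result \cite[Proposition~3.9]{DeFl93} for the projective norm). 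This yields $\nu(B)\leq C\|w\|_{D(\E)\otimes_\pi D(\E)}$ directly from the infimum characterisation of the projective tensor norm, with no limit passage against $\nu$ required. You should revise along these lines rather than trying to repair the extension-and-reduction route.
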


\begin{proof}
 %Firstly, note that the well-definedness of $\mathfrak{b}$ means that the integral in $\mathfrak{b}$ is absolutely convergent for every $u$, $v\in D(\E )$. 
 Let $B\subseteq \widetilde{\Omega}\times\widetilde{\Omega}$. Then, for every non-negative $w\in D(\E ) \otimes_{alg} D(\E )$ with $w = \sum_{i=1}^n u_i \otimes v_i$ for continuous $u_i, v_i \in D(\E)$ and $w \geq 1$ on a neighbourhood of $B$, we have
 \begin{align*}
  \nu (B) & \leq \int_{\widetilde{\Omega}\times\widetilde{\Omega}} w(x,y) \; d\nu (x,y) \\
  & = \sum_{i=1}^n \int_{\widetilde{\Omega}\times\widetilde{\Omega}} u_i(x) v_i(y) \; d\nu (x,y)
 \end{align*}
 Since $b$ is continuous on $D(\E)$, the above inequality yields 
 \[
    \nu(B)\le \sum_{i=1}^n C\, \| u_i\|_{D(\E )} \, \| v_i\|_{D(\E )}.
\]
The regularity of the form  and \cite[Proposition~3.9 on page~36]{DeFl93} implies now that $\nu(B)\le C\|w\|_{D(\E ) \otimes_\pi D(\E )}$ for all $w\in D(\E ) \otimes_{alg} D(\E )$ with $w\ge 1$ on a neighbourhood of $B$. 
Taking the infimum over all such $w$, the assertion
$
 \nu (B) \leq C\, \Cap (B) ,
$
follows.
\end{proof}

\begin{remark}
    \label{rem:well-defined-integrals}
    We point out that when speaking of an integral of the elementary tensors $u \otimes v\in D(\E )\otimes D(\E)$ with respect to a measure $\nu$ that is absolutely continuous with respect to the $\E$-capacity, we implicitly take the quasi-continuous representatives of $u$ and $v$ (this is possible due to Proposition~\ref{prop:quasi-continuity}).
\end{remark}

\begin{proof}[Proof of Theorem~\ref{main}]
Recall that, by assumption, both the semigroups $T$ and $\widehat{T}$ are real. Also, by \cite[Theorem~2.21]{Ou04}, the semigroup $T$ is dominated by $\widehat{T}$  if and only if $D(\E )$ is an ideal of $D(\widehat{\E})$ and
\begin{equation} \label{eq.domination.char}
     \widehat{\E} (|u|,|v|) \leq \Re\ \E (u,v)
\end{equation}
for every $u$, $v\in D(\E )$ with $u\bar{v}\geq 0$.

Now, if $D(\E )$ is an ideal of $D(\widehat{\E} )$ and if $\E$ is given as in equation \eqref{eq.domination} for some appropriate measure $\nu$, which is absolutely continuous with respect to the $\E$-capacity, and which satisfies the lower bound \eqref{eq.domination.2}, then
\begin{align*}
    \widehat{\E} (|u|,|v|) & \leq\Re\  \widehat{\E} (u,v) +\Re\  \int_{\widetilde{\Omega}\times \widetilde{\Omega}} u(x) \overline{v(y)} \; d\nu (x,y) \\
 & =\Re\  \E (u,v) . 
\end{align*}
for all $u$, $v\in D(\E )$ with $u\bar{v}\geq 0$,
Hence, the inequality \eqref{eq.domination.char} is fulfilled for every $u$, $v\in D(\E )$ such that $u\bar{v}\geq 0$. Therefore, $T$ is dominated by $\widehat{T}$. 

Conversely, suppose that $T$ is dominated by $\widehat{T}$. In particular, the inequality \eqref{eq.domination.char} is fulfilled for every $u$, $v\in D(\E )$ such that $u\bar{v}\geq 0$. Define the sesquilinear form
\begin{align*} 
 \Psi : D(\E ) \times D(\E ) & \to \C , \\
 (u,v) & \mapsto \E (u,v) - \widehat{\E} (u,v) .
\end{align*}
%where the right-hand side is interpreted as $\infty$ if $u\not\in D(\widehat{\E} )$. 
Let $u,v$ be positive elements in $D(\E)$. In particular, $u,v$ are real. Since the semigroups $T$ and $\widehat{T}$ are real, therefore by a characterisation for real semigroups \cite[Proposition~2.5]{Ou04}, we obtain that $\E(u,v)$ and $\widehat{\E} (u,v)$ are also real.
Thus, the inequality \eqref{eq.domination.char} implies that
\[
 \Psi (u,v) \geq 0 ,
\]
that is, $\Psi$ is positive on $D(\E ) \times D(\E )$ in the order sense. By the universal property of the algebraic tensor product $D(\E ) \otimes_{\alg} D(\E )$, associated with the sesquilinear form $\Psi$, a unique linear mapping $\psi : D(\E ) \otimes_\alg D(\E )\to\C$ exists such that the diagram
\[
   \begin{tikzcd}
    D(\E ) \times D(\E ) \arrow{r}{\mathfrak{b}} \arrow[swap]{dr}{\Psi} & D(\E ) \otimes_\alg D(\E ) \arrow{d}{\psi} \\
     & \C
  \end{tikzcd}
\]
commutes. Here, $\mathfrak{b} : D(\E ) \times D(\E ) \to D(\E ) \otimes_\alg D(\E )$, $(u,v) \mapsto u\otimes \bar{v}$ is the canonical sesquilinear form. By the construction of the algebraic tensor product and by the positivity of $\Psi$, we obtain that $\psi(u otimes v)$ is positive whenever $u$ and $v$ are positive. This readily implies that $\psi$ is a positive functional on $D(\E ) \otimes_\alg D(\E )$ (see, for instance, \cite[Section~7]{Fr72}).
In particular,  $\psi$ also positive on the smaller tensor product $(D(\E ) \cap {\mathcal A}) \otimes_\alg (D(\E ) \cap {\mathcal A})$.

In what follows, we write $C(\widetilde{\Omega})$ instead of $\mathcal A$; where $\widetilde{\Omega}$ denotes the compactification of $\Omega$ with respect to $\mathcal{A}$.
We consider two cases. Assume first, that the span of $D(\E ) \cap C(\widetilde{\Omega})$ is dense in $C(\widetilde{\Omega})$ with respect to the supremum norm. Then there exists a positive $u_0\in D(\E )\cap C(\widetilde{\Omega})$ such that $\| u_0 - 1\|_\infty \leq \frac12$. This function $u_0$ is an order unit in $C(\widetilde{\Omega})$ and similarly, $u_0\otimes u_0$ is an order unit in the injective tensor product $C(\widetilde{\Omega})\otimes_\varepsilon C(\widetilde{\Omega} )$; the injective tensor product actually is isometrically isomorphic to the space $C(\widetilde{\Omega} \times \widetilde{\Omega} )$ (see \cite[Example I.4.2(3)]{DeFl93}). The tensor product $(D(\E ) \cap C(\widetilde{\Omega})) \otimes_\alg (D(\E ) \cap C(\widetilde{\Omega}))$ therefore is majorizing in $C(\widetilde{\Omega} \times \widetilde{\Omega} )$ in the sense that for every $w\in C(\widetilde{\Omega} \times \widetilde{\Omega} )$ there exists $z\in (D(\E ) \cap C(\widetilde{\Omega})) \otimes_\alg (D(\E ) \cap C(\widetilde{\Omega}))$ such that $w\leq z$. The positivity of the mapping $\psi$ and Kantorovich's theorem \cite[Corollary 1.5.9]{MN91} now imply that $\psi$ uniquely extends to a positive and bounded linear form on the space $C(\widetilde{\Omega} \times \widetilde{\Omega} )$; actually, Kantorovich's theorem in this special situation is an exercise. By the Riesz-Markov representation theorem, there exists a unique, positive, and finite measure $\nu$ on $\widetilde{\Omega}\times\widetilde{\Omega}$ such that 
\begin{equation} \label{representation}
 \psi (u\otimes \bar{v}) = \Psi (u,v) = \int_{\widetilde{\Omega}\times \widetilde{\Omega}} u(x) \overline{v(y)} \; d\nu (x,y) ,
\end{equation}
and the implication is proved.

Secondly, assume that the span $D(\E ) \cap C(\widetilde{\Omega})$ is not dense in $C(\widetilde{\Omega})$. By assumption of $\mathcal A$-regularity, however, the span of $\big(D(\E ) \cap C(\widetilde{\Omega}) \cup \{1\}\big)$ is dense in $C(\widetilde{\Omega} )$. Hence, the closure $\mathcal A_0$ of the space $D(\E ) \cap C(\widetilde{\Omega})$ has co-dimension one in the space of continuous functions. The fact that $D(\E )$ is a lattice implies that $\mathcal A_0$ is actually a maximal ideal in $C(\widetilde{\Omega} )$. Hence, there exists a vector $x_\infty\in\widetilde{\Omega}$ such that  
\[
\mathcal A_0 = \{ u\in C(\widetilde{\Omega} ) | u(x_\infty ) = 0 \} = C_0 (\widetilde{\Omega}_\infty );
\]
where $\widetilde{\Omega}_\infty := \widetilde{\Omega} \setminus \{x_\infty\}$. Similarly as above, one shows that the tensor product $(D(\E ) \cap C(\widetilde{\Omega})) \otimes_\alg (D(\E ) \cap C(\widetilde{\Omega}))$ is majorizing in $C_c(\widetilde{\Omega}_\infty \times \widetilde{\Omega}_\infty )$, and therefore, $\psi$ uniquely extends to a positive and bounded linear form on $C_c(\widetilde{\Omega}_\infty \times \widetilde{\Omega}_\infty )$. By the Riesz-Markov representation theorem, there exists a unique, positive Radon measure $\nu$ on $\widetilde{\Omega}_\infty$ such that \eqref{representation} holds. Note that in this case, the measure $\nu$ need not be finite, but for all functions $u$, $v\in D(\E )$ the product $u\otimes \bar{v}$ is $\nu$-integrable.

Now, the measure $\nu$ is positive and the forms $\E$, $\widehat{\E}$, and in turn, $\Psi$ are Hermitian. This implies that the measure $\nu$ can be chosen to be symmetric. In fact, if necessary, it suffices to replace the measure $\nu$ by the measure $\tilde{\nu}$ given by $\tilde{\nu} (B) = \frac12 (\nu (B) + \nu (\tilde{B}))$ for every Borel set $B\subseteq\widetilde{\Omega}\times\widetilde{\Omega}$ (where again, $\tilde{B}$ is the reflection of $B$). Finally, the continuity of $\psi$ on the space $D(\E ) \otimes_\alg D(\E )$ together with Lemma~\ref{lem.abs} implies that $\nu$ is absolutely continuous with respect to the $\E$-capacity. 
\end{proof}

\section{Positivity and locality}
\label{section:locality}

In a recent article, Akhlil found a surprising connection between the positivity of a dominated semigroup and the locality of the generating form, at least when the dominating semigroup is generated by a local operator or a local form \cite[Theorem~4.3]{Ak18}. Let us recall, that an operator $A$ on $L^2_\mu (\Omega )$ is called {\em local}, if for every $u\in D(A)$, $v\in L^2_\mu (\Omega )$ satisfying $uv = 0$ one has $\langle Au , v\rangle_{L^2_\mu} = 0$. Note that the condition $uv=0$ for two $L^2$-functions is equivalent to the condition $|u|\wedge |v| = 0$. Similarly, we say that a sesquilinear form $\E$ is {\em local}, if for every $u$, $v\in D(\E )$ satisfying $uv = 0$ one has $\E (u,v) = 0$. This is equivalent to the condition that for every $u$, $v\in D(\E )$ satisfying $uv = 0$ one has
\begin{equation} \label{eq.local}
 \E (u+v) = \E (u) + \E (v). 
\end{equation}
The proof of the following is straightforward.

\begin{proposition}
 Let $\E$ be a closed, accretive, and $\mathcal A$-regular sesquilinear form on $L^2_\mu (\Omega )$; where $\mathcal A$ is a closed, Hermitian, and unital subalgebra  of $C^b (\Omega )$. Let $\widetilde{\Omega}$ be the compactification of $\Omega$ associated with $\mathcal{A}$ and let $\nu$ be a positive Borel measure on $\widetilde{\Omega} \times \widetilde{\Omega}$ such that the form
 \[
  {\mathfrak b} (u,v) = \int_{\widetilde{\Omega}\times\widetilde{\Omega}} u(x)\overline{v(y)} \; d\nu (x,y) 
 \]
is well defined and continuous on $D(\E )$. Then $\mathfrak b$ is local if and only if ${\rm supp}\, \nu$ is contained in the diagonal $\Delta := \{ (x,y)\in\widetilde{\Omega}\times\widetilde{\Omega} | x=y\}$. 
\end{proposition}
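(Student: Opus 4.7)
I would prove the two implications separately.

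For sufficiency ($\supp \nu \subseteq \Delta$ implies locality), identify $\Delta$ with $\widetilde\Omega$ via $x \mapsto (x,x)$; then $\nu$ pushes forward to a Borel measure $\rho$ on $\widetilde\Omega$ and
\[
 \mathfrak{b}(u,v) = \int_{\widetilde\Omega} u(x)\overline{v(x)} \; d\rho(x).
\]
If $u,v \in D(\E)$ satisfy $uv = 0$, then $u\bar v$ vanishes $\widetilde\mu$-a.e.\ on $\widetilde\Omega$. To conclude that the integral above is zero, I would pass to the quasi-continuous representatives of $u$ and $v$ (which exist by Proposition~\ref{prop:quasi-continuity}), and invoke Lemma~\ref{lem.abs} --- applicable since $\mathfrak{b}$ is continuous on $D(\E)$ --- to show that $u\bar v$ vanishes $\nu$-almost everywhere on $\Delta$, hence $\rho$-a.e.\ on $\widetilde\Omega$.

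For necessity, I would argue by contradiction. Assume $\mathfrak{b}$ is local and suppose $\supp \nu \not\subseteq \Delta$, so there is $(x_0,y_0) \in \supp \nu$ with $x_0 \neq y_0$. Using compact Hausdorffness of $\widetilde\Omega$, pick disjoint open neighborhoods $U \ni x_0$ and $V \ni y_0$. Using $\mathcal{A}$-regularity combined with a Urysohn-type construction inside $\mathcal{A} = C(\widetilde\Omega)$, I would construct non-negative $u, v \in D(\E) \cap \mathcal{A}$ with $\supp u \subseteq U$, $\supp v \subseteq V$, and $u(x_0), v(y_0) > 0$. The disjointness of their supports forces $uv = 0$ on $\widetilde\Omega$, so locality yields $\mathfrak{b}(u,v) = 0$. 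On the other hand, $u \otimes v$ is continuous and strictly positive on an open neighborhood of $(x_0, y_0) \in \supp \nu$, so $\int u(x)v(y) \, d\nu(x,y) > 0$, yielding a contradiction.

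The main obstacle is the construction in the necessity step of functions in $D(\E) \cap \mathcal{A}$ with prescribed supports: $\mathcal{A}$-regularity gives density statements but not directly functions of small support, so one must combine sup-norm approximation by elements of $\mathrm{span}((D(\E) \cap \mathcal{A}) \cup \{1\})$ with truncation or lattice-type operations preserving $D(\E)$. The authors' remark that the proof is ``straightforward'' presumably rests on a slick handling of this construction, perhaps by exploiting that $\mathcal{A}$ is a unital Hermitian algebra so that products and real/imaginary parts of approximants remain controllable.
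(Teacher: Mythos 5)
Your overall plan — quasi-continuity plus absolute continuity for sufficiency, a Urysohn-type separation for necessity — is the natural one, but both halves contain a genuine gap that is glossed over rather than resolved.

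In the sufficiency direction, the step ``invoke Lemma~\ref{lem.abs} to show that $u\bar v$ vanishes $\nu$-a.e.\ on $\Delta$'' does not follow from what you have. Lemma~\ref{lem.abs} only gives $\nu\ll\Cap$, where $\Cap$ is the $\E$-capacity on the \emph{product} $\widetilde\Omega\times\widetilde\Omega$. The hypothesis $uv=0$ gives $u\bar v=0$ $\widetilde\mu$-a.e.\ on $\widetilde\Omega$, i.e.\ control on the restriction of $u\otimes\bar v$ to the diagonal $\Delta$ up to a $\widetilde\mu$-null set. But $\Delta$ is itself $\mu\otimes\mu$-null (for nonatomic $\mu$), so the equality $w(x,y)=u(x)\bar v(y)$ $\mu\otimes\mu$-a.e.\ for the quasi-continuous representative $w$ of $u\otimes\bar v$ carries no information about $w|_\Delta$. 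To conclude you would need an ``a.e.\ $\Rightarrow$ q.e.'' lemma of the type found in \cite[Lemma~2.1.4]{FuOsTa11}, adapted so that it controls the diagonal trace in terms of the product-space capacity; such a lemma is not available among Proposition~\ref{prop:quasi-continuity} and Lemma~\ref{lem.abs} and is precisely what needs to be established. (For example, if $\nu=\delta_{(x_0,x_0)}$, you need to know $w(x_0,x_0)=0$; the set $\{u\bar v\neq 0\}$ being $\widetilde\mu$-null does not per se exclude $x_0$ from it.) In the necessity direction you correctly identify the obstruction: $\mathcal A$-regularity yields only density, and it does not by itself provide non-negative functions in $D(\E)\cap\mathcal A$ with prescribed small, disjoint supports. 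You flag this as ``the main obstacle'' but do not overcome it; as stated, the proof attempt stops short exactly at the crucial construction. Without an additional lattice or truncation-closure property of $D(\E)\cap\mathcal A$ (or a different way of exploiting the Riesz representation of the positive functional associated with $\nu$), this step remains open.
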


The first statement in the following theorem gives a condition under which the locality of a form implies the positivity of the generated semigroup. The second statement in the theorem is basically \cite[Theorem~4.3]{Ak18}, although with weaker assumptions. The proof in \cite{Ak18} uses the Beurling-Deny and Lejan representation of regular Dirichlet forms \cite{Al75,And75}; in particular, the dominated semigroup is a submarkovian semigroup, that is, it is positive {\em and} $L^\infty$-contractive. We give here a different proof that only uses the positivity of the dominated semigroup.

\begin{theorem} \label{thm.local}
Let $\E :D(\E )\times D(\E )\to\C$ and $\widehat{\E} : D(\widehat{\E} ) \times D(\widehat{\E} ) \to\C$ be two sesquilinear Hermitian forms on $L^2_\mu (\Omega )$. Denote the associate semigroups by $T$ and $\widehat{T}$ respectively and assume they are real.
\begin{enumerate}[\upshape (a)]
 \item If the form $\E$ is local and $D(\E )$ is a sublattice of $L^2_\mu (\Omega )$, then $T$ is a positive semigroup.
 \item Assume that the semigroup $T$ is positive and $T$ is dominated by $\widehat{T}$. Then locality of $\widehat{\E}$ implies the locality of $\E$.
\end{enumerate}
\end{theorem}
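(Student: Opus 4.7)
My plan for (a) is to verify the first Beurling-Deny criterion. Since $T$ is real, it suffices to check, for every \emph{real} $u\in D(\E)$, that $|u|\in D(\E)$ and $\E(|u|)\le \E(u)$. The sublattice hypothesis immediately gives $u^+,u^-,|u|\in D(\E)$, and $u^+u^-=0$ combined with locality of $\E$ yields $\E(u^+,u^-)=0$. Sesquilinear expansion together with the Hermitian property then gives
\[
 \E(|u|) = \E(u^+)+\E(u^-) = \E(u),
\]
and positivity of $T$ follows.

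Part (b) requires more care. My first step is a reduction: the locality of $\E$ will follow once $\E(u,v)=0$ is established for $u,v\in D(\E)$ with $u,v\ge 0$ and $uv=0$. The general case is obtained by sesquilinear expansion into real and imaginary parts (which lie in $D(\E)$ since $T$ is real) and subsequently into positive and negative parts (which lie in $D(\E)$ since positivity of $T$, via the first Beurling-Deny criterion, yields $|u|\in D(\E)$ for every $u\in D(\E)$).

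For non-negative $u,v$ with $uv=0$, my plan is to exploit the standard spectral-theorem identity
\[
 \E(u,v)=\lim_{t\to 0^+}\tfrac{1}{t}\bigl(\langle u,v\rangle - \langle T(t)u,v\rangle\bigr),
\]
valid on the whole form domain. Since $uv=0$ forces $\langle u,v\rangle = 0$, this reduces to $\E(u,v)=-\lim_{t\to 0^+}t^{-1}\langle T(t)u,v\rangle$. Positivity of $T$ makes each $\langle T(t)u,v\rangle$ non-negative, and therefore $\E(u,v)\le 0$. For the matching lower bound I would sandwich with $\widehat T$: domination together with positivity of $T$ gives $0\le T(t)u\le \widehat T(t)u$, whence
\[
 0\le \tfrac{1}{t}\langle T(t)u,v\rangle \le \tfrac{1}{t}\langle \widehat T(t)u,v\rangle.
\]
By Ouhabaz's characterisation of domination \cite[Theorem~2.21]{Ou04}, $D(\E)$ is an ideal of $D(\widehat{\E})$, so $u,v\in D(\widehat{\E})$; the same limit identity applied to $\widehat{\E}$, combined with locality of $\widehat{\E}$, forces the upper bound to tend to $0$. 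The sandwich then yields $\E(u,v)=0$, as required.

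The genuine obstacle lies in (b): one must assemble four ingredients -- positivity of $T$, positivity of $\widehat T$, domination, and locality of $\widehat{\E}$ -- into a single sandwich estimate, and the limit identity must be available throughout $D(\E)\times D(\E)$ rather than only on the domain of the generator. By contrast, part (a) amounts to bookkeeping once the criterion is invoked.
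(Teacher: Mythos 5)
Your proof of part (a) coincides with the paper's: sublattice gives $u^\pm,|u|\in D(\E)$, locality gives $\E(u^+,u^-)=0$, and bilinear expansion yields $\E(|u|)=\E(u)$, whence the first Beurling--Deny criterion applies.

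For part (b), you take a genuinely different route. The paper stays entirely at the form level: from positivity of $T$ and Beurling--Deny it derives $\Re\,\E(u^+,u^-)\le 0$ for real $u$; combined with Ouhabaz's domination inequality $\widehat{\E}(|u|,|v|)\le\Re\,\E(u,v)$ (for $u\bar v\ge 0$) and locality of $\widehat{\E}$, this sandwiches $\Re\,\E(u,v)$ between $0$ and $0$ for non-negative disjointly supported $u,v$, and the general case follows by sesquilinear expansion and the $e^{i\theta}$-rotation trick. You instead pass to the semigroup level and use the spectral-calculus identity $\E(u,v)=\lim_{t\to 0^+}t^{-1}\bigl(\langle u,v\rangle-\langle T(t)u,v\rangle\bigr)$ on $D(\E)\times D(\E)$, together with the pointwise operator sandwich $0\le T(t)u\le\widehat T(t)u$ (valid for $u\ge 0$ by positivity of $T$ and domination) and the corresponding limit identity for $\widehat{\E}$ applied to $u,v\in D(\widehat{\E})$ (the ideal property supplies membership in $D(\widehat{\E})$, as you note). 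Both arguments are correct. The trade-off is that the paper's approach uses only the Beurling--Deny criterion and Ouhabaz's characterisation and never invokes a limit representation of the form, whereas yours needs the limit identity on the full form domain (a standard consequence of the spectral theorem for the self-adjoint generator, but a real external ingredient), and in exchange the domination hypothesis enters in a rather more transparent operator-level way. Your reduction step is sound: from $uv=0$ one does obtain $u_j^\pm v_k^\pm=0$ for the real/imaginary and positive/negative parts, all of which lie in $D(\E)$ because $T$ is real and positive.
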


In Section~\ref{sec.eventual-positivity}, we give an example to show that the positivity assumption in Theorem~\ref{thm.local}(b) cannot be dropped. In fact, we show that it is not even sufficient that the semigroup operators become (and remain) positive after a large enough time.

\begin{proof}
(a) Assume that the form $\E$ is local and that $D(\E )$ is a sublattice of $L^2_\mu (\Omega )$. Then, by the characterisation \eqref{eq.local} of locality,
 \begin{align*}
  \E (u) & = \E (u^+ - u^-) \\
  & = \E (u^+) + \E (-u^-) \\
  & = \E (u^+) + \E (u^-) \\
  & = \E (u^+ + u^-) \\
  & = \E (|u|) 
 \end{align*}
 for all $u\in D(\E)$. Thus, the first Beurling-Deny criterion implies that the semigroup $T$ is positive.

(b) Suppose that the form $\widehat{\E}$ generating $\widehat{T}$ is local. The positivity of $T$ and the first Beurling-Deny criterion imply, that for every real $u\in D(\E )$,
\begin{align*}
 \E (u^+) + \E (u^-) - 2\Re\ \E (u^+,u^-) & = \E (u) \\
 & \geq \E (|u|) \\
 & =  \E (u^+) + \E (u^-) + 2\Re\ \E (u^+,u^-) .
\end{align*}
Hence,
$
    \Re\  \E (u^+ ,u^-) \leq 0 .
$
for all real $u\in D(\E )$.
Thus, if $u$, $v\in D(\E )$ are both positive and $uv=0$, then the characterisation of domination from \eqref{eq.domination.char} implies
\[
 0 \geq \Re\ \E (u,v) \geq \Re\ \widehat{\E} (u,v) .
\]
However, because $\widehat{\E}$ is local, the right-hand side of this chain of inequalities is zero. Thus,
\[
\Re\ \E (u,v) = 0 \text{ for every positive } u,v\in D(\E ) \text{ such that } uv=0 .
\]
From the sesquilinearity of the form, we then obtain
\[
\Re\ \E (u,v) = 0 \text{ for every } u,v\in D(\E ) \text{ such that } uv= 0.
\]
Finally, replacing $u$ by $e^{i\theta} u$ yields 
\[
 \E (u,v) = 0 \text{ for every } u,v\in D(\E ) \text{ such that } uv= 0.
\]
Whence, the form $\E$ is local.
\end{proof}

\section{Final Remarks}
\label{section:remarks}

\subsection{The Laplace operator with Dirichlet and Neumann boundary conditions: what is in between?}

Let $T$, $\widehat{T}$, and $S$ be three semigroups generated by closed, accretive, and Hermitian
forms $\E$, $\widehat{\E}$, and $\mathfrak{b}$ respectively. If $S$ is \emph{sandwiched} between $T$ and $\widehat{T}$,  in the sense that $\widehat{T}$ dominates $S$ which in turn dominates $T$, then $S$ is necessarily positive. Indeed this holds because
\[
    |T(t)u| \leq S(t) |u|
\]
for every $u\in L^2_\mu(\Omega)$. Therefore Theorem~\ref{thm.local} implies that if $\widehat{\E}$ is local, then $\mathfrak{b}$ is local as well.

Let $\Omega \subset\mathbb{R}^{N}$ ($N \geq 1$) be a bounded open set with boundary $\partial \Omega$. By $\E^N : H^1 (\Omega) \times H^1 (\Omega) \to \R$ and $\E^D : H^1_0 (\Omega) \times H^1_0 (\Omega) \to \R$ we denote the sesquilinear and Hermitian forms of the Neumann-Laplace operator and the Dirichlet-Laplace operator, respectively, i.e.,
\begin{align*}
 \E^N (u,v) & := \int_{\Omega} \nabla u \overline{\nabla v} \; \mathrm{d}x \qquad (u, v\in H^1 (\Omega))  \text{ and} \\
 \E^D (u,v) & := \int_{\Omega} \nabla u \overline{\nabla v} \; \mathrm{d}x \qquad (u, v\in H^1_0 (\Omega)) . 
\end{align*}
Both forms $\E^N$ and $\E^D$ are local and $T^N$ dominates $T^D$, where $T^N$ and $T^D$ are the associated semigroups. Let $S$ be sandwiched between $T^N$ and $T^D$, generated by a closed, accretive, and Hermitian form $\mathfrak{b}$. Then (as remarked above), $\mathfrak{b}$ is necessarily local by Theorem~\ref{thm.local}. If $\Omega$ has Lipschitz boundary, so that $\mathfrak{b}$ is also $C(\overline{\Omega})$-regular, then the form is associated to a Laplace operator with local Robin boundary conditions \cite[Theorem~4.1]{ArWa03b}. As shown in Theorem~\ref{thm.local} (b) above, and as already shown by Akhlil in  \cite{Ak18}, the assumption of locality in \cite[Theorem~4.1]{ArWa03b} is superfluous. To sum up, \cite[Theorem~4.1]{ArWa03b} can be simplified as follows:

\begin{proposition}
    Let $S$ be a semigroup generated by a closed, accretive, and Hermitian form $\mathfrak{b}$ on $L^2_\mu(\Omega)$. Also, let $T^D$ and $T^N$ denote the semigroups generated by the Dirichlet Laplacian and Neumann Laplacian respectively.
    
    If $\Omega$ has Lipschitz boundary, then $S$ is sandwiched between $T^D$ and $T^N$ if and only if $S$ is generated by a Laplace operator with local Robin boundary conditions.
\end{proposition}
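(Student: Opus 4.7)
The plan is to split the equivalence into the two standard directions and to invoke \cite[Theorem~4.1]{ArWa03b}, using Theorem~\ref{thm.local}(b) to drop the locality hypothesis present in that reference.

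The \textquoteleft if\textquoteright\ direction is the easy half: if $\mathfrak{b}$ is the form of a Laplace operator with local Robin boundary conditions, then $\mathfrak{b}$ is automatically local, $C(\overline{\Omega})$-regular, and satisfies the ideal and form-bound conditions of Ouhabaz's domination criterion between $\E^D$ and $\E^N$, so that $T^D(t)|u| \leq S(t)|u|$ and $|S(t)u| \leq T^N(t)|u|$ for every $u\in L^2_\mu(\Omega)$ (this is the content of one direction of \cite[Theorem~4.1]{ArWa03b}).

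For the converse, suppose that $S$ is sandwiched between $T^D$ and $T^N$. The chain of pointwise inequalities $|T^D(t)u| \leq S(t)|u| \leq T^N(t)|u|$ valid for all $u \in L^2_\mu(\Omega)$ immediately shows that $S$ is positive. Since the Neumann form $\E^N$ is local, $S$ is positive, and $S$ is dominated by $T^N$, Theorem~\ref{thm.local}(b) applied to the pair $(\mathfrak{b}, \E^N)$ yields that $\mathfrak{b}$ is local as well. Next, the Lipschitz regularity of $\partial\Omega$ ensures that $\mathfrak{b}$ is $C(\overline{\Omega})$-regular: the sandwich forces $H^1_0(\Omega) \subseteq D(\mathfrak{b}) \subseteq H^1(\Omega)$ with $\mathfrak{b}$ acting as the restriction of $\E^N$ to $D(\mathfrak{b})$, and on a Lipschitz domain the classical density of $C^\infty(\overline{\Omega})$ in $H^1(\Omega)$, combined with the ideal property of $D(\mathfrak{b})$ inside $D(\E^N)$, supplies sufficiently many $C(\overline{\Omega})$-approximations in the form norm.

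With both locality and $C(\overline{\Omega})$-regularity of $\mathfrak{b}$ now established, \cite[Theorem~4.1]{ArWa03b} applies and identifies $\mathfrak{b}$ with the form of a Laplace operator under local Robin boundary conditions. I expect the density step underlying $C(\overline{\Omega})$-regularity to be the most delicate point of the argument, since one must produce smooth approximations that remain inside the sandwich-constrained domain $D(\mathfrak{b})$; the promotion of locality from hypothesis to conclusion, by contrast, is essentially a direct invocation of Theorem~\ref{thm.local}(b) and constitutes the conceptual simplification over \cite[Theorem~4.1]{ArWa03b}.
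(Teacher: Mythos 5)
Your overall strategy mirrors the paper's: both directions hinge on \cite[Theorem~4.1]{ArWa03b}, and the conceptual gain over that reference is that Theorem~\ref{thm.local}(b), combined with the observation that sandwiching forces positivity of $S$, upgrades locality of $\mathfrak{b}$ from hypothesis to conclusion. That part of your argument is sound and agrees with the paper.

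However, your attempt to \emph{prove} $C(\overline{\Omega})$-regularity contains a genuine error. You assert that ``the sandwich forces $H^1_0(\Omega) \subseteq D(\mathfrak{b}) \subseteq H^1(\Omega)$ with $\mathfrak{b}$ acting as the restriction of $\E^N$ to $D(\mathfrak{b})$.'' The second clause is false: the whole point of the classification is that $\mathfrak{b}$ is \emph{not} the restriction of $\E^N$ but differs from it by a boundary term (the Robin form $\int_\Omega \nabla u\cdot\overline{\nabla v}\,dx + \int_{\partial\Omega} u\bar v\,d\mu$ is a sandwiched form and is not a restriction of $\E^N$ unless $\mu=0$). Consequently, the norm $\|\cdot\|_{D(\mathfrak{b})}$ is in general strictly stronger than the $H^1$-norm on $D(\mathfrak{b})$ (domination gives $\E^N(u)\le\mathfrak{b}(u)$ for positive $u$), so the classical density of $C^\infty(\overline{\Omega})$ in $H^1(\Omega)$ does not, on its own, produce approximations that converge in the form norm of $\mathfrak{b}$. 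Establishing that the sandwiched form on a Lipschitz (or continuous-boundary) domain is regular is exactly the delicate lattice-theoretic content of \cite{ArWa03b}; the paper does not re-derive it, it invokes it. You should do the same: replace your ad hoc density argument by a direct appeal to \cite{ArWa03b} (which proves regularity of the sandwiched form under the Lipschitz-boundary hypothesis), and then conclude, as you do, via \cite[Theorem~4.1]{ArWa03b} with the locality hypothesis supplied by Theorem~\ref{thm.local}(b).
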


A similar representation theorem was also proved in a nonlinear setting in which closed, accretive, and Hermitian forms are replaced by convex, lower semi-continuous energy functions. For example, \cite{ChWa12} characterises all nonlinear semigroups which are sandwiched between the semigroups generated by the $p$-Laplace operator with Neumann boundary conditions and the $p$-Laplace operator with Dirichlet boundary conditions, if the energy function generating the sandwiched semigroup is local. Later, in \cite{Cl21b}, such a characterisation was generalised to semigroups generated by nonlinear and local Dirichlet forms. It is not clear whether locality is a necessary assumption in the nonlinear situation.

\subsection{Eventual positivity} \label{sec.eventual-positivity}

Let $\E:D(\E)\times D(\E)\to\mathbb C$ be a sesquilinear Hermitian form on $L^2_{\mu}(\Omega)$, where $\Omega\subseteq\R^N$ is a bounded open set.
As a consequence of Theorem~\ref{thm.local}(b), we have that, if the semigroup $T$ associated to $\E$ is dominated by the semigroup generated by the Neumann Laplacian (see above), then the positivity of $T$ implies locality of $\E$. However, there are non-positive semigroups that are dominated by the semigroup generated by the Neumann Laplacian. Of course, due to Theorem~\ref{thm.local}(a), they are necessarily non-local. We give an example:~Let $\Omega=(0,1)$ and let $T$ be the semigroup associated with the non-local form
\begin{align*}
    \E(u,v) & = \int_0^1 u'\bar{v'}\, dx+ \langle Bu\restrict{\{0,1\}},v\restrict{\{0,1\}}\rangle \\
    & = \int_0^1 u'\bar{v'}\, dx + \lambda (u(0)\overline{v(0)} + u(1)\overline{v(1)} + u(0)\overline{v(1)} + u(1)\overline{v(0)}  )
\end{align*}
for $u,v\in D(\E):=H^1(0,1)$; where $B=\begin{bmatrix} \lambda &\lambda\\ \lambda &\lambda\end{bmatrix}$ and $\lambda$ is a positive non-zero real number. The aforementioned domination is a consequence of Theorem~\ref{main}. Indeed, by Remark~\ref{rem.dom}, it suffices to show that the measure is diagonally dominant on $H^1(0,1)$. Note that the measure here is just the Dirac measure on the four corners of the unit square $[0,1]\times [0,1]$ and hence diagonally dominant. In fact, the domination of the semigroup $T$ by the Neumann Laplacian is also mentioned in \cite[Section~3]{Ak18} for the case $\lambda=1$. 

While non-positivity of the semigroup $T$ is a consequence of the first Beurling-Deny criterion, it can alternatively be deduced by Theorem~\ref{thm.local}(b).
Nevertheless, the semigroup $T$ is {\em uniformly eventually positive}, i.e., there exists a time $t_0\geq 0$ such that $T(t)$ leaves the positive cone invariant for all $t\geq t_0$. Indeed, this was shown for $\lambda=1$ in \cite[Theorem~4.2]{DaGl18a} and the proof for other values of $\lambda$ remains same. 

The first example of an eventually positive semigroup in infinite dimensions was given by Daners in \cite{Da14} which led to a systematic study in \cite{DaGlKe16, DaGlKe16a}. The theory has been further developed in \cite{DaGl17, DaGl18a, DaGl18, ArGl21}. Recently, the first author along with Glück showed that the semigroup $T$ above is \emph{eventually sandwiched} between the semigroups generated by the Dirichlet Laplacian and Neumann Laplacian \cite[Theorem~4.5]{ArGl22}.

\subsection*{Acknowledgements} 

The first and the third named author were supported by 
Deu\-tscher Aka\-de\-mi\-scher Aus\-tausch\-dienst. Part of the work was done during the third author's pleasant stay at TU Dresden.

%\nocite{FuOsTa11}
%\nocite{BoHi91}
%\nocite{Ou96}
%\nocite{Ou04}
%\nocite{MN91}
%\nocite{DeFl93}
%\nocite{Ak18}
%\nocite{Al75,And75}
%\nocite{OABO21}
%\nocite{Da89}

\providecommand{\bysame}{\leavevmode\hbox to3em{\hrulefill}\thinspace}

\bibliographystyle{plainurl}
\bibliography{ralph}

\end{document}